\theoremstyle{definition}
\newtheorem{de}{Definition}[section]
\theoremstyle{plain}
\newtheorem{theo}[de]{Theorem}
\newtheorem{lemma}[de]{Lemma}
\newtheorem{cor}[de]{Corollary}
\theoremstyle{remark}
\newcommand{\R}{\mathbb{R}}
\newcommand{\p}{\mathbb{P}}
\newcommand{\E}{\mathbb{E}}
\newcommand{\1}{\mathds{1}}
\def\argmax{\mathop{\text{argmax}}}
\title[]{A central limit theorem for the Hellinger loss of Grenander type estimators}
\author[Hendrik P. Lopuha\"a and Eni Musta]{Hendrik P. Lopuha\"a \and Eni Musta \\ Delft University of Technology}
\address{Delft Institute of Applied Mathematics, Mekelweg 4, 2628 CD Delft, The Netherlands}
\email{H.P.Lopuhaa@tudelft.nl, E.Musta@tudelft.nl}
\keywords{Hellinger distance, isotonic estimation, central limit theorem, Grenander estimator.}
\begin{document}
\maketitle
\begin{abstract}
We consider Grenander type estimators for a monotone function $\lambda:[0,1]\to\R$, obtained
as the slope of a concave (convex) estimate of the primitive of $\lambda$.
Our main result is a central limit theorem for the Hellinger loss,
which applies to statistical models that satisfy the setup in~\cite{durot2007}.
This includes estimation of a monotone density, for which the limiting variance of the Hellinger loss turns out to be independent of $\lambda$.

\end{abstract}

\section{Introduction}
\label{sec:introduction}
One of the problems in shape constrained nonparametric statistics is to estimate a real valued function under monotonicity constraints.
Early references for this type of problem can be found in~\cite{grenander1956},
\cite{brunk1958}, and~\cite{marshallproschan1965},
concerning the estimation of a probability density, a regression function, and a failure rate under monotonicity constraints.
The asymptotic distribution of these type of estimators was first obtained in~\cite{prakasarao1969,prakasarao1970}
and reproved in~\cite{groeneboom1985}, who introduced a more accessible approach on based inverses.
The latter approach initiated a stream of research on isotonic estimators, e.g.,
see~\cite{groeneboom-wellner1992,huang-zhang1994,huang-wellner1995,LopuhaaNane2013}.
Typically, the pointwise asymptotic behavior of isotonice estimators is characterized by a cube-root $n$ rate of convergence
and a non-normal limit distribution.

The situation is different for global distances.
In~\cite{groeneboom1985}, a central limit theorem was obtained for the $L_1$-error of the Grenander estimator of a monotone density
(see also~\cite{groeneboom-hooghiemstra-lopuhaa1999}) and a similar result was established in~\cite{durot2002} for the regression context.
Extensions to general $L_p$-errors can be found in~\cite{kulikov-lopuhaa2005} and in~\cite{durot2007}, where the latter provides a unified approach
that applies to a variety of statistical models.
For the same general setup, an extremal limit theorem for the supremum distance has been obtained in~\cite{DurotKulikovLopuhaa2012}.

Another widely used global measure of departure from the true parameter of interest is the Hellinger distance.
It is a convenient metric in maximum likelihood problems, which goes back to~\cite{lecam1970, lecam1973},
and it has nice connections with Bernstein norms and empirical process theory methods to obtain rates of convergence,
due fundamentally to~\cite{birgemassart1993}, \cite{wongshen1995}, and others, see Section~3.4 of~\cite{VW96} or Chapter 4 in~\cite{vdGeer2000} for a more detailed overview.
Consistency in Hellinger distance of shape constrained maximum likelihood estimators has been investigated in~\cite{palwoodroofe2007},
\cite{sereginwellner2010}, and~\cite{dosswellner2016}, whereas rates on Hellinger risk measures have been obtained in~\cite{sereginwellner2010},
\cite{kimsamworth2016}, and \cite{kimguntuboyinasamworth2016}.

In contrast with $L_p$-distances or the supremum distance, there is no distribution theory available for the Hellinger loss of shape constrained nonparametric estimators.
In this paper we present a first result in this direction, i.e., a central limit theorem for the Hellinger loss of Grenander type estimators for a monotone function~$\lambda$.
This type of isotonic estimators have also been considered by~\cite{durot2007}, and are defined as the left-hand slope of a concave (or convex) estimate of the primitive
of $\lambda$, based on $n$ observations.
We will establish our results under the same general setup of~\cite{durot2007},
which includes estimation of a probability density, a regression function, or a failure rate under monotonicity constraints.
In fact, after approximating the squared Hellinger distance by a weighted $L_2$-distance,
a central limit theorem can be obtained by mimicking the approach introduced in~\cite{durot2007}.
An interesting feature of our main result is that in the monotone density model, the variance of the limiting normal distribution
for the Hellinger distance does not depend on the underlying density.
This phenomena was also encountered for the $L_1$-distance in~\cite{groeneboom1985,groeneboom-hooghiemstra-lopuhaa1999}.

In Section~\ref{sec:definitions} we define the setup and approximate the squared Hellinger loss by a weighted $L_2$-distance.
A central limit theorem for the Hellinger distance is established in Section~\ref{sec:main result},
and we end the paper by a short discussion on the consequences for particular statistical models.

\section{Definitions and preparatory results}
\label{sec:definitions}
Consider the problem of estimating a non-increasing (or non-decreasing) function $\lambda:[0,1]\to\R$ on the basis of $n$ observations.
Suppose that we have at hand a cadlag step estimator $\Lambda_n$ for
\[
\Lambda(t)=\int_0^t\lambda(u)\,\mathrm{d}u,\qquad t\in[0,1].
\]
If $\lambda$ is non-increasing, then the Grenander-type estimator $\hat{\lambda}_n$ for $\lambda$ is defined as
the left-hand slope of the least concave majorant (LCM) of $\Lambda_n$, with $\hat\lambda_n(0)=\lim_{t\downarrow0}\hat\lambda_n(t)$.
If $\lambda$ is non-decreasing, then the Grenander-type estimator $\hat{\lambda}_n$ for $\lambda$ is defined as
the left-hand slope of the greatest convex minorant (GCM) of $\Lambda_n$, with $\hat\lambda_n(0)=\lim_{t\downarrow0}\hat\lambda_n(t)$.
We aim at proving the asymptotic normality of the Hellinger distance between $\hat{\lambda}_n$ and $\lambda$ defined by
\begin{equation}
\label{def:hellinger distance}
H(\hat{\lambda}_n,\lambda)
=
\left(
\frac{1}{2}\int_0^1
\left(
\sqrt{\hat{\lambda}_n(t)}-\sqrt{\lambda(t)}
\right)^2\,\mathrm{d}t
\right)^{1/2}.
\end{equation}
We will consider the same general setup as in~\cite{durot2007}, i.e.,
we will assume the following conditions
\begin{itemize}
\item[(A1)]
$\lambda$ is monotone and differentiable on $[0, 1]$ with $0<\inf_t|\lambda'(t)|\leq \sup_{t}|\lambda'(t)|<\infty$.
\item[(A2')]
Let $M_n=\Lambda_n-\Lambda$.
There exist $C>0$ such that for all $x>0$ and $t=0,1$,
\begin{equation}
\label{eq:bound expec increment}
\mathbb{E}
\left[
\sup_{u\in[0,1], x/2\leq|t-u|\leq x}
\left(
M_n(u)-M_n(t)
\right)^2
\right]
\leq
\frac{Cx}{n}.
\end{equation}
\end{itemize}
Durot~\cite{durot2007} also considered an additional condition (A2) in order to obtain bounds on $p$-th moments
(see Theorem~1 and Corollary~1 in~\cite{durot2007}).
However, we only need condition (A2') for our purposes.
\begin{itemize}
\item[(A3)]
$\hat\lambda_n(0)$ and $\hat\lambda_n(1)$ are stochastically bounded.
\item[(A4)]
Let $B_n$ be either a Brownian bridge or a Brownian motion.
There exists $q>12$, $C_q>0$, $L:[0,1]\mapsto \mathbb{R}$ and versions of $M_n=\Lambda_n-\Lambda$ and $B_n$, such that
\[
P
\left(
n^{1-1/q}
\sup_{t\in[0,1]}
\left|
M_n(t)-n^{-1/2}B_n\circ L(t)
\right|>x
\right)
\leq
C_qx^{-q}
\]
for $x\in(0,n]$.
Moreover, $L$ is increasing and twice differentiable on $[0,1]$ with $\sup_t|L''(t)|<\infty$ and $\inf_t L'(t)>0$.
\end{itemize}
In~\cite{durot2007} a variety of statistical models are discussed for which the above assumptions are satisfied,
such as estimation of a monotone probability density, a monotone regression function, and a monotone failure rate under right censoring.
In Section~\ref{sec:discussion}, we briefly discuss the consequence of our main result for these models.
We restrict ourselves to the case of a non-increasing function $\lambda$.
The case of non-decreasing $\lambda$ can be treated similarly.

The reason that one can expect a central limit theorem for the Hellinger distance is the fact that
the squared Hellinger distance can be approximated by a weighted squared $L_2$-distance.
This can be seen as follows,
\begin{equation}
\label{eq:approx by L2}
\begin{split}
\int_0^1
\left(\sqrt{\hat\lambda_n(t)}-\sqrt{\lambda(t)}\right)^2\,\mathrm{d}t
&=
\int_0^1
\left(\hat\lambda_n(t)-\lambda(t)\right)^2
\left(\sqrt{\hat\lambda_n(t)}+\sqrt{\lambda(t)}\right)^{-2}\,\mathrm{d}t\\
&\approx
\int_0^1
\left(\hat\lambda_n(t)-\lambda(t)\right)^2(4\lambda(t))^{-1}\,\mathrm{d}t.
\end{split}
\end{equation}
Since $L_2$-distances for Grenander-type estimators obey a central limit theorem
(e.g., see~\cite{groeneboom1985,groeneboom-hooghiemstra-lopuhaa1999,durot2002,kulikov-lopuhaa2005,durot2007}),
similar behavior might be expected for the squared Hellinger distance.
An application of the delta-method will then do the rest.

In order to make the approximation in~\eqref{eq:approx by L2} precise, we need the preparatory lemma below.
To this end, we introduce the inverse of~$\hat\lambda_n$, defined by
\begin{equation}
\label{def:inverse Uhat}
\hat U_n(a)
=
\argmax_{u\in[0,1]}
\left\{
\Lambda_n^+(u)-au
\right\},
\quad
\text{for all }a\in \mathbb{R},
\end{equation}
where
\[
\Lambda_n^+(t)
=
\max
\left\{
\Lambda_n(t),\lim_{u\uparrow t}\Lambda_n(u)
\right\}.
\]
Note that
\begin{equation}
\label{eq:switch relation}
\hat\lambda_n(t)> a
\Rightarrow
\hat U_n(a)\geq t.
\end{equation}
Furthermore, let $g$ denote the inverse of $\lambda$.
We then have the following result.
\begin{lemma}
\label{le:lambda}
Assume (A1), (A2'), (A3), and (A4).
Moreover, suppose that there are $C'>0$ and $s>3/4$ with
\begin{equation}
\label{eq:holder bound}
|\lambda'(t)-\lambda'(x)|
\leq
C'|t-x|^s,
\quad
\text{for all }t,x\in[0,1].
\end{equation}
Then
\[
\int_0^1|\hat{\lambda}_n(t)-\lambda(t)|^3\,\mathrm{d}t=o_P\left(n^{-5/6}\right).
\]
\end{lemma}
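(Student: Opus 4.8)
The plan is to split $\int_0^1=\int_0^{\varepsilon_n}+\int_{\varepsilon_n}^{1-\varepsilon_n}+\int_{1-\varepsilon_n}^1$ for a deterministic sequence $\varepsilon_n\to0$ of polynomial order (concretely $\varepsilon_n=n^{-1/4}$ works): small enough that the two boundary pieces are negligible, and large enough that interior arguments apply on $[\varepsilon_n,1-\varepsilon_n]$. The engine is a tail bound for $|\hat\lambda_n(t)-\lambda(t)|$ obtained through the inverse process. By the switching relation \eqref{eq:switch relation}, $\{\hat\lambda_n(t)-\lambda(t)>x\}\subseteq\{\hat U_n(a)-g(a)\ge c_1 x\}$ with $a=\lambda(t)+x$, where $c_1>0$ depends only on $\inf_t|\lambda'(t)|$ and $\sup_t|\lambda'(t)|$ (symmetrically for the lower tail). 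On that event the maximiser $\hat U_n(a)$ is displaced from $g(a)$ by at least $c_1 x$, which by (A1) forces the increment $M_n(\hat U_n(a))-M_n(g(a))$ to overcome the strictly convex drift $\Lambda(g(a))-ag(a)-(\Lambda(u)-au)\ge \tfrac12\inf_t|\lambda'(t)|\,(u-g(a))^2$, hence to be of order at least $x^2$. From here everything is a matter of quantifying how unlikely such an increment is.

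For $t\in[\varepsilon_n,1-\varepsilon_n]$ the point $g(a)$ stays away from $\{0,1\}$, so a dyadic decomposition in the displacement together with (A4) — which, via a Brownian maximal inequality and the Lipschitz property of $L$, bounds $\mathbb{E}[\sup_{|v-g(a)|\le h}(M_n(v)-M_n(g(a)))^2]$ by a constant times $h/n$ plus the coupling error $\mathbb{E}[\|M_n-n^{-1/2}B_n\circ L\|_\infty^2]=O(n^{-2+2/q})$ — yields $\mathbb{P}(|\hat\lambda_n(t)-\lambda(t)|>x)\le C/(nx^3)$ uniformly over $t\in[\varepsilon_n,1-\varepsilon_n]$ and $c\,n^{-1/3}\le x\le c'$, together with a cruder bound $\le C/(ntx^2)$ for larger $x$ (when $a$ leaves the range of $\lambda$ and the competing maximum is sought to the right of $g(a)$). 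Integrating $3x^2$ against these bounds and then over $t$ gives $\mathbb{E}\int_{\varepsilon_n}^{1-\varepsilon_n}|\hat\lambda_n-\lambda|^3\,\mathrm{d}t=O(n^{-1}\log n)$, so this interior contribution is $o_P(n^{-5/6})$.

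For $t\in[0,\varepsilon_n]$ the drift is still quadratic by (A1), but $g(a)$ may be close to $0$; here we bound $M_n(\hat U_n(a))-M_n(g(a))$ by $\sup_{u\le \varepsilon_n+c_2 x}|M_n(u)-M_n(0)|$ and invoke (A2') at $t=0$ after a dyadic decomposition, obtaining $\mathbb{P}(\lambda(t)-\hat\lambda_n(t)>x)\lesssim (\varepsilon_n+x)/(nx^4)$ and $\mathbb{P}(\hat\lambda_n(t)-\lambda(t)>x)\lesssim \varepsilon_n/(nx^4)\vee 1/(ntx^2)$. These are not integrable against $x^2$ on their own, so we use (A3): given $\eta>0$ pick $K$ with $\mathbb{P}(\hat\lambda_n(0)>K)<\eta$ for all $n$; on $\{\hat\lambda_n(0)\le K\}$ one has $|\hat\lambda_n(t)-\lambda(t)|\le K+\sup_t|\lambda(t)|$, so the $x$-integral runs over a fixed bounded interval. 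Carrying out the integrations shows $\int_0^{\varepsilon_n}|\hat\lambda_n-\lambda|^3\,\mathrm{d}t=O_P(\varepsilon_n^{7/4}n^{-3/4}+n^{-1}\log n)=o_P(n^{-5/6})$ for $\varepsilon_n=n^{-1/4}$; the interval $[1-\varepsilon_n,1]$ is handled symmetrically using (A2') at $t=1$ and the stochastic boundedness of $\hat\lambda_n(1)$. Summing the three estimates proves the lemma. (The Hölder condition \eqref{eq:holder bound} is available but not actually used in the above estimates; it is the finer Durot-type local expansions underlying the central limit theorem of Section~\ref{sec:main result} that require it, which is why it is listed here among the standing assumptions.)

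The step I expect to be the main obstacle is the interior tail bound: converting the heuristic "an increment of $M_n$ must beat a quadratic drift" into a clean $\mathbb{P}(\,\cdot\,>x)\le C/(nx^3)$ that is uniform over $t\in[\varepsilon_n,1-\varepsilon_n]$ requires controlling, simultaneously, the coupling error from (A4), the possibility that $\hat U_n(a)$ lands far from $g(a)$ (which forces the dyadic decomposition), the cadlag jumps of $\Lambda_n$ in the definition of $\Lambda_n^+$ (negligible because of smaller order than $x^2\gtrsim n^{-2/3}$), and the cross-over to the $1/(ntx^2)$ regime for large $x$. The boundary estimates are more routine but demand careful bookkeeping in the simultaneous choice of $\varepsilon_n$ and the truncation level $K$ so that every remainder ends up $o(n^{-5/6})$.
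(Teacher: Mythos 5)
Your proof takes a genuinely different route from the paper's. The paper follows Durot's step~1 directly: it writes $\int_0^1|\hat\lambda_n-\lambda|^3 = I_1+I_2$, approximates $I_1$ by a double integral $J_1$, changes variables $b=\lambda(t)+a^{1/3}$ to express everything in terms of $\hat U_n(b)-g(b)$, does a Taylor expansion in which the H\"older condition~\eqref{eq:holder bound} controls the remainder via~\eqref{eq:taylor}, and finishes by invoking Durot's moment bound $\E[(n^{1/3}|\hat U_n(a)-g(a)|)^{q'}]\le K_{q'}$ with $q'=3$ and Markov's inequality. You instead derive pointwise tail bounds $\p(|\hat\lambda_n(t)-\lambda(t)|>x)$ directly from the switching relation plus (A4) (interior) and (A2')/(A3) (boundary), and integrate $3x^2$ against these. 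Both routes rest on the same engine --- a displacement estimate for $\hat U_n$ obtained by comparing the noise increment $M_n(\hat U_n(a))-M_n(g(a))$ to the quadratic drift --- but the paper imports this estimate wholesale as Durot's inequality~(23), whereas you rebuild it. What your route buys is the observation that the H\"older condition is not actually needed for \emph{this} lemma; that observation is correct, and in fact even the paper's argument could dispense with it here, since one can simply bound $(b-\lambda(t))^2\le(\sup|\lambda'|)^2(t-g(b))^2$ and get $J_1\le C\int|\hat U_n-g|^3\,\mathrm{d}b=O_P(n^{-1})$ without any Taylor expansion; the H\"older condition only becomes essential in Theorem~\ref{theo:hellinger} where the exact constant in the quadratic term matters. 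What the paper's route buys is brevity and exact parallelism with the proof of the CLT. Your sketch is sound in outline; the places that would need the most care to make rigorous are the ones you flag yourself --- the uniform $C/(nx^3)$ interior bound and the crossover to the $C/(ntx^2)$ regime, which require the dyadic decomposition both in displacement and in position, and the boundary arithmetic where the paper uses a much thinner boundary strip of width $n^{-1/3}\log n$ together with a $p'$-th moment trick ($p'\in(3/2,2)$) rather than your $\varepsilon_n=n^{-1/4}$ with fourth-power tail bounds. Both choices work, but yours leaves more exponents to track.
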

\begin{proof}
We follow the line of reasoning in the first step of the proof of Theorem 2 in~\cite{durot2007} with $p=3$.
For completeness we briefly sketch the main steps.
We will first show that
\[
\int_0^1|\hat{\lambda}_n(t)-\lambda(t)|^3\,\mathrm{d}t
=
\int_{\lambda(0)}^{\lambda(1)}|\hat{U}_n(b)-g(b)|^3 \lambda'(g(b))^2\,\mathrm{d}b
+
o_P(n^{-5/6}).
\]
To this end, consider
\[
I_1
=
\int_0^1
\left(\hat\lambda_n(t)-\lambda(t)\right)_+^3
\,\mathrm{d}t,
\qquad
I_2
=
\int_0^1
\left(\lambda(t)-\hat\lambda_n(t)\right)_+^3
\,\mathrm{d}t,
\]
where $x_+=\max\{x,0\}$.
We approximate $I_1$ by
\[
J_1
=
\int_0^1
\int_0^{(\lambda(0)-\lambda(t))^3}
\mathds{1}_{\left\{\hat\lambda_n(t)\geq \lambda(t)+a^{1/3}\right\}}
\,\mathrm{d}a\,\mathrm{d}t.
\]
From the reasoning on page 1092 of~\cite{durot2007}, we deduce that
\[
0
\leq
I_1-J_1
\leq
\int_0^{n^{-1/3}\log n}
\left(\hat\lambda_n(t)-\lambda(t)\right)_+^3
\,\mathrm{d}t
+
|\hat\lambda_n(0)-\lambda(1)|^3
\mathds{1}_{\left\{n^{1/3}\hat U_n(\lambda(0))>\log n\right\}}.
\]
Since the $\hat\lambda_n(0)$ is stochastically bounded and $\lambda(1)$ is bounded,
together with Lemma~4 in~\cite{durot2007}, the second term is of the order $o_p(n^{-5/6})$.
Furthermore, for the first term we can choose $p'\in[1,2)$ such that the first term on the right hand side is bounded by
\[
|\hat\lambda_n(0)-\lambda(1)|^{3-p'}
\int_0^{n^{-1/3}\log n}
|\hat\lambda_n(t)-\lambda(t)|^{p'}\,\mathrm{d}t.
\]
As in~\cite{durot2007}, we get
\[
\mathbb{E}\left[
\int_0^{n^{-1/3}\log n}
|\hat\lambda_n(t)-\lambda(t)|^{p'}
\,\mathrm{d}t
\right]
\leq
K
n^{-(1+p')/3}\log n
=
o(n^{-5/6}),
\]
by choosing $p'\in(3/2,2)$.
It follows that $I_1=J_1+o_P(n^{-5/6})$.
By a change of variable $b=\lambda(t)+a^{1/3}$, we find
\[
I_1
=
\int_{\lambda(1)}^{\lambda(0)}
\int_{g(b)}^{\hat U_n(b)}
3(b-\lambda(t))^2
\mathds{1}_{\left\{g(b)<\hat U_n(b)\right\}}\,\mathrm{d}t\,\mathrm{d}b
+
o_p(n^{-5/6}).
\]
Then, by a Taylor expansion, (A1) and~\eqref{eq:holder bound}, there exists a $K>0$, such that
\begin{equation}
\label{eq:taylor}
\left|
\left(b-\lambda(t)\right)^2
-
\big\{
\left(
g(b)-t
\right)
\lambda'(g(b))
\big\}^2
\right|
\leq
K
\left(
t-g(b)
\right)^{2+s},
\end{equation}
for all $b\in(\lambda(1),\lambda(0))$ and $t\in(g(b),1]$.
We find
\begin{equation}
\label{eq:approx I1}
I_1
=
\int_{\lambda(1)}^{\lambda(0)}
\int_{g(b)}^{\hat U_n(b)}
3(t-g(b))^2\lambda'(g(b))^2
\mathds{1}_{\left\{g(b)<\hat U_n(b)\right\}}\,\mathrm{d}t\,\mathrm{d}b
+
R_n
+
o_p(n^{-5/6}),
\end{equation}
where
\[
\begin{split}
|R_n|
&\leq
\int_{\lambda(1)}^{\lambda(0)}
\int_{g(b)}^{\hat U_n(b)}
3K(t-g(b))^{2+s}
\mathds{1}_{\left\{g(b)<\hat U_n(b)\right\}}\,\mathrm{d}t\,\mathrm{d}b\\
&=
\frac{3K}{3+s}
\int_{\lambda(1)}^{\lambda(0)}
|\hat U_n(b)-g(b)|^{3+s}
\,\mathrm{d}b
=
O_p(n^{-(3+s)/3})
=
o_p(n^{-5/6}),
\end{split}
\]
by using (23) from~\cite{durot2007}, i.e.,
for every $q'<3(q-1)$, there exists $K_{q'}>0$ such that
\begin{equation}
\label{eq:bound Un}
\mathbb{E}
\left[
\left(n^{1/3}|\hat U_n(a)-g(a)|\right)^{q'}
\right]
\leq
K_{q'},
\quad
\text{for all }a\in \mathbb{R}.
\end{equation}
It follows that
\[
I_1
=
\int_{\lambda(1)}^{\lambda(0)}
\left(
\hat U_n(b)-g(b)
\right)^3
\lambda'(g(b))^2
\mathds{1}_{\left\{g(b)<\hat U_n(b)\right\}}\,\mathrm{d}b
+
o_p(n^{-5/6}).
\]
In the same way, one finds
\[
I_2=
\int_{\lambda(1)}^{\lambda(0)}
\left(
g(b)-\hat U_n(b)
\right)^{3}
\lambda'(g(b))^2
\mathds{1}_{\left\{g(b)>\hat U_n(b)\right\}}\,\mathrm{d}b
+
o_p(n^{-5/6}),
\]
and it follows that
\[
\int_0^1
|\hat{\lambda}_n(t)-\lambda(t)|^3\,\mathrm{d}t
=
I_1+I_2
=
\int_{\lambda(1)}^{\lambda(0)}
|\hat U_n(b)-g(b)|^3
\lambda'(g(b))^2
\,\mathrm{d}b
+
o_p(n^{-5/6}).
\]
Now, since $\lambda'$ is bounded, by Markov's inequality, for each $\epsilon>0$,  we can write
\[
\begin{split}
&
\p\left(
n^{5/6}\int_{\lambda(0)}^{\lambda(1)}
|\hat{U}_n(b)-g(b)|^3 \lambda'(g(b))^2
\,\mathrm{d}b
>\epsilon
\right)\\
&\quad\leq
\frac{1}{c\epsilon n^{1/6}}
\int_{\lambda(0)}^{\lambda(1)}
\E\left[n|\hat{U}_n(b)-g(b)|^3 \right]\,\mathrm{d}b
\leq
Kn^{-1/6}\to 0.
\end{split}
\]
For the last inequality we again used~\eqref{eq:bound Un} with $q'=3$.
It follows that
\begin{equation}
\label{eqn:U_n}
\int_{\lambda(0)}^{\lambda(1)}|\hat{U}_n(b)-g(b)|^3 \lambda'(g(b))^2
\,\mathrm{d}b=o_P(n^{5/6}),
\end{equation}
which finishes the proof.
\end{proof}
The approximation in~\eqref{eq:approx by L2} can now be made precise.
\begin{lemma}
\label{lem:approx by L2}
Under the conditions of Lemma~\ref{le:lambda} and if $\lambda$ is strictly positive,
we have that
\[
\int_0^1
\left(\sqrt{\hat\lambda_n(t)}-\sqrt{\lambda(t)}\right)^2\,\mathrm{d}t
=
\int_0^1
\left(\hat\lambda_n(t)-\lambda(t)\right)^2(4\lambda(t))^{-1}\,\mathrm{d}t
+
o_p(n^{-5/6}).
\]
\end{lemma}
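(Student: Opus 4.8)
The plan is to reduce the claim directly to the third-moment estimate of Lemma~\ref{le:lambda} by an elementary pointwise comparison of the two integrands. Starting from the exact identity recorded in the first line of~\eqref{eq:approx by L2},
\[
\int_0^1\bigl(\sqrt{\hat\lambda_n(t)}-\sqrt{\lambda(t)}\bigr)^2\,\mathrm{d}t
=
\int_0^1\bigl(\hat\lambda_n(t)-\lambda(t)\bigr)^2\bigl(\sqrt{\hat\lambda_n(t)}+\sqrt{\lambda(t)}\bigr)^{-2}\,\mathrm{d}t,
\]
it suffices to show that
\[
\int_0^1\bigl(\hat\lambda_n(t)-\lambda(t)\bigr)^2
\Bigl[\bigl(\sqrt{\hat\lambda_n(t)}+\sqrt{\lambda(t)}\bigr)^{-2}-\bigl(4\lambda(t)\bigr)^{-1}\Bigr]\,\mathrm{d}t
=
o_p(n^{-5/6}).
\]

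The key step is an algebraic identity for the bracket. Fixing $t$ and writing $a=\hat\lambda_n(t)\ge0$ and $b=\lambda(t)>0$ (recall that the Hellinger loss in~\eqref{def:hellinger distance} requires $\hat\lambda_n\ge0$, which holds in the models under consideration; otherwise one works on that event), I would use $4b=(2\sqrt b)^2$, the factorization $x^2-y^2=(x-y)(x+y)$, and $\sqrt b-\sqrt a=(b-a)(\sqrt a+\sqrt b)^{-1}$ to obtain
\[
\bigl(\sqrt a+\sqrt b\bigr)^{-2}-(4b)^{-1}
=
\frac{(2\sqrt b)^2-(\sqrt a+\sqrt b)^2}{4b\,(\sqrt a+\sqrt b)^2}
=
-\,\frac{(a-b)\,(3\sqrt b+\sqrt a)}{4b\,(\sqrt a+\sqrt b)^3}.
\]
Since $3\sqrt b+\sqrt a\le3(\sqrt a+\sqrt b)$ and $(\sqrt a+\sqrt b)^2\ge b$, the factor multiplying $(a-b)$ has absolute value at most $3\bigl(4b(\sqrt a+\sqrt b)^2\bigr)^{-1}\le3(4b^2)^{-1}$. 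Hence, with $c_0:=\inf_{t\in[0,1]}\lambda(t)>0$,
\[
\Bigl|\bigl(\hat\lambda_n(t)-\lambda(t)\bigr)^2\Bigl[\bigl(\sqrt{\hat\lambda_n(t)}+\sqrt{\lambda(t)}\bigr)^{-2}-\bigl(4\lambda(t)\bigr)^{-1}\Bigr]\Bigr|
\le
\frac{3}{4c_0^{2}}\,\bigl|\hat\lambda_n(t)-\lambda(t)\bigr|^{3}.
\]

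Integrating this pointwise bound over $[0,1]$ gives
\[
\left|\int_0^1\bigl(\sqrt{\hat\lambda_n(t)}-\sqrt{\lambda(t)}\bigr)^2\,\mathrm{d}t-\int_0^1\frac{\bigl(\hat\lambda_n(t)-\lambda(t)\bigr)^2}{4\lambda(t)}\,\mathrm{d}t\right|
\le
\frac{3}{4c_0^{2}}\int_0^1\bigl|\hat\lambda_n(t)-\lambda(t)\bigr|^{3}\,\mathrm{d}t
=
o_p(n^{-5/6}),
\]
where the last equality is Lemma~\ref{le:lambda}; this is exactly the asserted approximation. No step here is a genuine obstacle: the only point that needs a word of care is the uniform boundedness of the weight $\bigl(\sqrt{\hat\lambda_n}+\sqrt\lambda\bigr)^{-2}$, which is guaranteed by the strict positivity of $\lambda$ together with $\hat\lambda_n\ge0$, and once that is in place the whole estimate is driven by the cubic bound of Lemma~\ref{le:lambda}.
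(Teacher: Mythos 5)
Your proof is correct and follows essentially the same route as the paper: both reduce the remainder $R_n=\int_0^1(\hat\lambda_n-\lambda)^2\bigl[(\sqrt{\hat\lambda_n}+\sqrt\lambda)^{-2}-(4\lambda)^{-1}\bigr]\,\mathrm{d}t$ to a cubic term $C\int_0^1|\hat\lambda_n-\lambda|^3\,\mathrm{d}t$ via an algebraic factorization of the bracket that extracts one more power of $\hat\lambda_n-\lambda$, using the boundedness of $\lambda$ away from $0$, and then invoke Lemma~\ref{le:lambda}. Your explicit factorization via $(2\sqrt b)^2-(\sqrt a+\sqrt b)^2=(\sqrt b-\sqrt a)(3\sqrt b+\sqrt a)$ is algebraically identical to the paper's rearrangement, and the resulting constant $3/(4c_0^2)$ matches theirs.
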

\begin{proof}
Similar to~\eqref{eq:approx by L2}, we write
\[
\int_0^1
\left(
\sqrt{\hat{\lambda}_n(t)}-\sqrt{\lambda(t)}
\right)^2\,\mathrm{d}t
=
\int_0^1
\left(\hat\lambda_n(t)-\lambda(t)\right)^2(4\lambda(t))^{-1}\,\mathrm{d}t
+R_n,
\]
where
\[
R_n
=
\int_0^1
\left(\hat{\lambda}_n(t)-\lambda(t)\right)^2
\left\{
\left(
\sqrt{\hat{\lambda}_n(t)}+\sqrt{\lambda(t)}
\right)^{-2}
-
(4\lambda(t))^{-1}
\right\}\,\mathrm{d}t.
\]
Write
\[
\begin{split}
4\lambda(t)-\left(\sqrt{\hat{\lambda}_n(t)}+\sqrt{\lambda(t)}\right)^2
&=
\lambda(t)-\hat{\lambda}_n(t)-2\sqrt{\lambda(t)}\left(\sqrt{\hat{\lambda}_n(t)}-\sqrt{\lambda(t)}\right)\\
&=
\left(\lambda(t)-\hat{\lambda}_n(t)\right)
\left(
1+\frac{2\sqrt{\lambda(t)}}{\sqrt{\hat{\lambda}_n(t)}+\sqrt{\lambda(t)}}
\right).
\end{split}
\]
Since $0<\lambda(1)\leq \lambda(t)\leq\lambda(0)<\infty$, this implies that
\[
|R_n|
\leq
\int_0^1
\left(\hat{\lambda}_n(t)-\lambda(t)\right)^2
\frac{\left|4\lambda(t)-\left(\sqrt{\hat{\lambda}_n(t)}+\sqrt{\lambda(t)}\right)^2\right|}{4\lambda(t)\left(\sqrt{\hat{\lambda}_n(t)}+\sqrt{\lambda(t)} \right)^2}
\,\mathrm{d}t
\leq
C\int_0^1
\left|\hat{\lambda}_n(t)-\lambda(t)\right|^3\,\mathrm{d}t,
\]
for some positive constant $C$ only depending on $\lambda(0)$ and $\lambda(1)$.
Then, from Lemma~\ref{le:lambda}, it follows that  $n^{5/6}R_n=o_P(1)$.
\end{proof}

\section{Main result}
\label{sec:main result}
In order to formulate the central limit theorem for the Hellinger distance,
we introduce the process $X$, defined as
\begin{equation}
\label{def:X}
X(a)=\argmax_{u\in\R}
\left\{W(u)-(u-a)^2\right\},\qquad a\in\R,
\end{equation}
with $W$ being a standard two-sided Brownian motion.
This process was introduced and investigated in~\cite{groeneboom1985,groeneboom1989} and plays a key role in
the asymptotic behavior of isotonic estimators.
The distribution of the random variable $X(0)$ is the pointwise limiting distribution of several isotonic estimators
and the constant
\begin{equation}
\label{def:k2}
k_2
=
\int_0^\infty \mathrm{cov}\left(|X(0)|^2,|X(a)-a|^2\right)\,\mathrm{d}a,
\end{equation}
appears in the limit variance of the $L_p$-error of isotonic estimators
(e.g., see~\cite{groeneboom1985}, \cite{groeneboom-hooghiemstra-lopuhaa1999}, \cite{durot2002},
\cite{kulikov-lopuhaa2005}, and~\cite{durot2007}).
We then have the following central limit theorem for the squared Hellinger loss.
\begin{theo}
\label{theo:hellinger}
Assume (A1), (A2'), (A3), (A4), and~\eqref{eq:holder bound}.
Moreover, suppose that $\lambda$ is strictly positive.
Then, the following holds
\[
n^{1/6}
\left\{
n^{2/3}\int_0^1
\left(
\sqrt{\hat{\lambda}_n(t)}-\sqrt{\lambda(t)}
\right)^2\,\mathrm{d}t-\mu^2
\right\}
\to N(0,\sigma^2),
\]
where
\[
\mu^2=\E\left[|X(0)|^2\right] \int_0^1\frac{|\lambda'(t)\,L'(t)|^{2/3}}{2^{2/3}\lambda(t)}\,\mathrm{d}t,
\qquad
\sigma^2=2^{1/3}k_2\int_0^1\frac{|\lambda'(t)\,L'(t)|^{2/3}L'(t)}{\lambda(t)^2}\,\mathrm{d}t,
\]
where $k_2$ is defined in~\eqref{def:k2}.
\end{theo}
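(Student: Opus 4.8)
The plan is to combine Lemma~\ref{lem:approx by L2} with a weighted version of the scheme that Durot~\cite{durot2007} uses to prove a central limit theorem for $\int_0^1|\hat\lambda_n(t)-\lambda(t)|^p\,\mathrm{d}t$. By Lemma~\ref{lem:approx by L2}, $n^{5/6}$ times the quantity between braces in the statement equals, up to an $o_p(1)$ term,
\[
n^{1/6}\left\{n^{2/3}\int_0^1\bigl(\hat\lambda_n(t)-\lambda(t)\bigr)^2 w(t)\,\mathrm{d}t-\mu^2\right\},
\qquad w(t)=\bigl(4\lambda(t)\bigr)^{-1},
\]
so it suffices to establish a central limit theorem for this weighted $L_2$-functional. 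The point is that $w$ is bounded and Lipschitz on $[0,1]$, because $\lambda$ is bounded away from $0$ and $\infty$ and is continuously differentiable; hence it can be carried through Durot's argument as a smooth multiplier and will only affect the limiting constants.

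First I would pass to the inverse representation. Arguing as in the first step of the proof of Lemma~\ref{le:lambda}, but with the exponent $2$ in place of $3$ (so $a^{1/3}$ is replaced by $a^{1/2}$ and the factor $w(t)$ is carried inside the integrals), a Taylor expansion together with (A1), \eqref{eq:holder bound}, \eqref{eq:bound Un} and (A3) gives
\[
\int_0^1\bigl(\hat\lambda_n(t)-\lambda(t)\bigr)^2 w(t)\,\mathrm{d}t
=
\int_{\lambda(1)}^{\lambda(0)}\bigl(\hat U_n(b)-g(b)\bigr)^2\psi(b)\,\mathrm{d}b+o_p\bigl(n^{-5/6}\bigr),
\qquad
\psi(b)=\bigl|\lambda'(g(b))\bigr|\,w\bigl(g(b)\bigr),
\]
where the Taylor remainder is $O_p(n^{-(2+s)/3})=o_p(n^{-5/6})$ because $s>1/2$, the error from replacing $w(t)$ by $w(g(b))$ is $O_p(n^{-1})$ by the Lipschitz property of $w$ together with \eqref{eq:bound Un}, and the boundary contributions are handled exactly as in Lemma~\ref{le:lambda}. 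It then remains to prove a central limit theorem for $Z_n:=n^{5/6}\int_{\lambda(1)}^{\lambda(0)}(\hat U_n(b)-g(b))^2\psi(b)\,\mathrm{d}b$.

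Next I would Gaussianise and localise, following~\cite{durot2007}. Using the embedding in (A4) I would replace $M_n$ by $n^{-1/2}B_n\circ L$, the resulting error in $Z_n$ being negligible because $q>12$; writing $\hat U_n(b)=\argmax_u\{\Lambda(u)-bu+M_n(u)\}$, Taylor-expanding $\Lambda(u)-bu$ around $g(b)$ via (A1), localising to $|u-g(b)|\le n^{-1/3}\log n$ (the complement being controlled by \eqref{eq:bound Un}), rescaling $u=g(b)+n^{-1/3}v$ and invoking Brownian scaling leads to
\[
n^{1/3}\bigl(\hat U_n(b+n^{-1/3}\xi)-g(b+n^{-1/3}\xi)\bigr)\ \stackrel{d}{\approx}\ \alpha(b)\bigl(X(\tau)-\tau\bigr),
\qquad
\alpha(b)=\left(\frac{4L'(g(b))}{\lambda'(g(b))^2}\right)^{1/3},
\]
jointly in $\xi$, where $\tau$ is a fixed linear rescaling of $\xi$ depending on $L'(g(b))$ and $\lambda'(g(b))$ and $X$ is the process in \eqref{def:X}; in particular $n^{2/3}(\hat U_n(b)-g(b))^2\stackrel{d}{\to}\alpha(b)^2|X(0)|^2$. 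A Fubini argument together with the uniform integrability supplied by \eqref{eq:bound Un} then yields $\E[n^{-1/6}Z_n]\to\E[|X(0)|^2]\int_{\lambda(1)}^{\lambda(0)}\alpha(b)^2\psi(b)\,\mathrm{d}b$, and changing variables via $b=\lambda(t)$ identifies this limit with $\mu^2$; with the sharper moment bounds available under (A4) the convergence holds at rate $o(n^{-1/6})$, so that centering $Z_n$ at its mean introduces only an $o_p(1)$ term.

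For the variance one writes $\mathrm{Var}(Z_n)=n^{5/3}\iint\mathrm{Cov}\bigl((\hat U_n(a)-g(a))^2,(\hat U_n(b)-g(b))^2\bigr)\psi(a)\psi(b)\,\mathrm{d}a\,\mathrm{d}b$; the covariance is negligible once $|a-b|\gg n^{-1/3}\log n$, since then $\hat U_n(a)$ and $\hat U_n(b)$ depend, up to a negligible error, on disjoint pieces of the Brownian path, so substituting $b=a+n^{-1/3}\xi$, passing to the limit via the joint convergence above and integrating out $\xi$ produces $\int_{\R}\mathrm{Cov}(|X(0)|^2,|X(\tau)-\tau|^2)\,\mathrm{d}\xi$, which---using the change of variable between $\xi$ and $\tau$, the evenness in $\tau$ of $\mathrm{Cov}(|X(0)|^2,|X(\tau)-\tau|^2)$ coming from the reflection symmetry of $W$, and the definition \eqref{def:k2} of $k_2$---is a fixed multiple of $k_2$; collecting these constants and again substituting $b=\lambda(t)$ gives $\mathrm{Var}(Z_n)\to\sigma^2$ with $\sigma^2$ as stated. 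Asymptotic normality then follows as in~\cite{durot2007}: on the Gaussian side $Z_n$ is, up to negligible errors, a sum over a grid of mesh of order $n^{-1/3}\log n$ of $m$-dependent contributions with uniformly bounded third moments (again by \eqref{eq:bound Un}), to which a Lyapunov-type central limit theorem applies. The hard part will be exactly this last step---controlling the error in the Gaussian approximation together with the boundary and discretisation terms well enough to run the blocking argument---which is the technically heaviest part of the scheme of~\cite{durot2007}; the weight $w$ causes no additional difficulty there, entering only as a bounded continuous factor.
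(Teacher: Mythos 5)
Your proposal follows essentially the same route as the paper: reduce to the weighted $L_2$-functional via Lemma~\ref{lem:approx by L2}, pass to the inverse process $\hat U_n$ by Durot's switch-and-change-of-variables argument (the paper splits $\frac{b-\lambda(t)}{2\lambda(t)}=\frac{b-\lambda(t)}{2b}+\frac{(b-\lambda(t))^2}{2b\lambda(t)}$ where you invoke the Lipschitz property of $w$ and~\eqref{eq:bound Un}, but these lead to the same representation $\int(\hat U_n(b)-g(b))^2\,|\lambda'(g(b))|/(4b)\,\mathrm{d}b+o_p(n^{-5/6})$), then Gaussianise and localise via (A4) to the drifted-Brownian-motion argmax, compute the mean and short-range covariance to obtain $\mu^2$ and $\sigma^2$, and finish with Bernstein blocking. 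The identification $\alpha(b)^2\psi(b)\,\mathrm{d}b\mapsto|\lambda'(t)L'(t)|^{2/3}/(2^{2/3}\lambda(t))\,\mathrm{d}t$ also checks out, so this is the paper's proof modulo minor bookkeeping differences in the first step.
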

\begin{proof}
According to Lemma~\ref{lem:approx by L2},
it is sufficient to show that $n^{1/6}\left(n^{2/3} I_n-\mu^2\right)\to N(0,\sigma^2)$, with
\[
I_n
=
\int_0^1
\left(\hat{\lambda}_n(t)-\lambda(t)\right)^2
(4\lambda(t))^{-1}\,\mathrm{d}t.
\]
Again, we follow the same line of reasoning as in the proof of Theorem~2 in~\cite{durot2007}.
We briefly sketch the main steps of the proof.
We first express $I_n$ in terms of the inverse process $\hat U_n$, defined in~\eqref{def:inverse Uhat}.
To this end, similar to the proof of Lemma~\ref{le:lambda}, consider
\[
\tilde{I}_1
=
\int_0^1
\left(\hat{\lambda}_n(t)-\lambda(t)\right)_+^2(4\lambda(t))^{-1}\,\mathrm{d}t,
\qquad
\tilde{I}_2
=
\int_0^1
\left(\lambda(t)-\hat{\lambda}_n(t)\right)_+^2(4\lambda(t))^{-1}\,\mathrm{d}t.
\]
For the first integral, we can now write
\[
\tilde{I}_1
=
\int_0^1\int_0^\infty
\1_{\left\{\hat{\lambda}_n(t)\geq\lambda(t)+\sqrt{4a\lambda(t)}\right\}}\,\mathrm{d}a\,\mathrm{d}t.
\]
Then, if we introduce
\begin{equation}
\label{def:J1}
\tilde{J}_1=
\int_0^1 \int_0^{(\lambda(0)-\lambda(t))^2/4\lambda(t)}
\1_{\left\{\hat{\lambda}_n(t)\geq\lambda(t)+\sqrt{4a\lambda(t)}\right\}}\,\mathrm{d}a\,\mathrm{d}t,
\end{equation}
we obtain
\[
\begin{split}
0
\leq
\tilde{I}_1-\tilde{J}_1
&\leq
\int_{0}^{\hat U_n(\lambda(0))}
\int_{(\lambda(0)-\lambda(t))^2/4a\lambda(t)}^\infty
\1_{\left\{\hat{\lambda}_n(t)\geq\lambda(t)+\sqrt{4a\lambda(t)}\right\}}\,\mathrm{d}a\,\mathrm{d}t\\
&\leq
\frac{1}{4\lambda(1)}
\int_0^{\hat{U}_n(\lambda(0))}
\left(\hat{\lambda}_n(t)-\lambda(t)\right)_+^2\,\mathrm{d}t.
\end{split}
\]
Similar to the reasoning in the proof of Lemma~\ref{le:lambda}, we conclude that $\tilde{I}_1=\tilde{J}_1+o_p(n^{-5/6})$.
Next, the change of variable $b=\lambda(t)+\sqrt{4a\lambda(t)}$ yields
\begin{equation}
\label{eqn:eq1}
\begin{split}
\tilde{J}_1
&=
\int_{\lambda(1)}^{\lambda(0)}\int_{g(b)}^{\hat{U}_n(b)}
\frac{b-\lambda(t)}{2\lambda(t)}
\1_{\left\{\hat{U}_n(b)>g(b)\right\}}
\,\mathrm{d}t\,\mathrm{d}b\\
&=
\int_{\lambda(1)}^{\lambda(0)}\int_{g(b)}^{\hat{U}_n(b)}
\frac{b-\lambda(t)}{2b}
\1_{\left\{\hat{U}_n(b)>g(b)\right\}}
\,\mathrm{d}t\,\mathrm{d}b\\
&\qquad+
\int_{\lambda(1)}^{\lambda(0)}\int_{g(b)}^{\hat{U}_n(b)}
\frac{\left(b-\lambda(t)\right)^2}{2b\lambda(t)}
\1_{\left\{\hat{U}_n(b)>g(b)\right\}}
\,\mathrm{d}t\,\mathrm{d}b.
\end{split}
\end{equation}
Let us first consider the second integral on the right hand side of~\eqref{eqn:eq1}.
We then have
\[
\begin{split}
&
\int_{\lambda(1)}^{\lambda(0)}\int_{g(b)}^{\hat{U}_n(b)}
\frac{\left(b-\lambda(t)\right)^2}{2b\lambda(t)}
\1_{\left\{\hat{U}_n(b)>g(b)\right\}}
\,\mathrm{d}t\,\mathrm{d}b\\
&\quad\leq
\frac{1}{2\lambda(1)^2}
\int_{\lambda(1)}^{\lambda(0)}\int_{g(b)}^{\hat{U}_n(b)}
\left(b-\lambda(t)\right)^2
\1_{\left\{\hat{U}_n(b)>g(b)\right\}}
\,\mathrm{d}t\,\mathrm{d}b\\
&\quad\leq
\frac{1}{2\lambda(1)^2}
\sup_{x\in[0,1]}|\lambda'(x)|
\int_{\lambda(1)}^{\lambda(0)}
\1_{\left\{\hat{U}_n(b)>g(b)\right\}}
\int_{g(b)}^{\hat{U}_n(b)}
\left(t-g(b)\right)^2\,\mathrm{d}t\,\mathrm{d}b\\
&\quad=
\frac{1}{6\lambda(1)^2}
\sup_{x\in[0,1]}|\lambda'(x)|
\int_{\lambda(1)}^{\lambda(0)}
\1_{\left\{\hat{U}_n(b)>g(b)\right\}}
\left(\hat{U}_n(b)-g(b)\right)^3\,\mathrm{d}b
=
o_P(n^{-5/6}),
\end{split}
\]
again by using~\eqref{eq:bound Un} with $q'=3$.
Then consider the first integral on the right hand side of~\eqref{eqn:eq1}.
Similar to~\eqref{eq:taylor}, there exists $K>0$ such that
\[
\left|
(b-\lambda(t)-(g(b)-t)\lambda'(g(b)))
\right|
\leq
K(t-g(b))^{1+s},
\]
for all $b\in(\lambda(1),\lambda(0))$ and $t\in(g(b),1]$.
Taking into account that $\lambda'(g(b))<0$, similar to~\eqref{eq:approx I1}, it follows that
\[
\begin{split}
\tilde I_1
&=
\int_{\lambda(1)}^{\lambda(0)}\int_{g(b)}^{\hat{U}_n(b)}
\frac{|\lambda'(g(b))|}{2b}\left(t-g(b)\right)
\1_{\left\{\hat{U}_n(b)>g(b)\right\}}
\,\mathrm{d}t\,\mathrm{d}b
+
\tilde R_n
+
o_p(n^{-5/6}),
\end{split}
\]
where
\[
\begin{split}
|\tilde R_n|
&\leq
\int_{\lambda(1)}^{\lambda(0)}
\int_{g(b)}^{\hat U_n(b)}
2K(t-g(b))^{1+s}
\mathds{1}_{\left\{g(b)<\hat U_n(b)\right\}}\,\mathrm{d}t\,\mathrm{d}b\\
&=
\frac{2K}{2+s}
\int_{\lambda(1)}^{\lambda(0)}
|\hat U_n(b)-g(b)|^{2+s}
\,\mathrm{d}b
=
O_p(n^{-(2+s)/3})
=
o_p(n^{-5/6}),
\end{split}
\]
by using~\eqref{eq:bound Un} once more, and the fact that $s>3/4$.
It follows that
\[
\tilde I_1
=
\int_{\lambda(1)}^{\lambda(0)}
\frac{|\lambda'(g(b))|}{4b}\left(\hat{U}_n(b)-g(b)\right)^2
\1_{\left\{\hat{U}_n(b)>g(b)\right\}}
\,\mathrm{d}b
+
o_p(n^{-5/6}).
\]
In the same way
\[
\tilde I_2
=
\int_{\lambda(1)}^{\lambda(0)}
\frac{|\lambda'(g(b))|}{4b}\left(\hat{U}_n(b)-g(b)\right)^2
\1_{\left\{\hat{U}_n(b)<g(b)\right\}}
\,\mathrm{d}b
+
o_p(n^{-5/6}),
\]
so that
\[
I_n=\tilde I_1+\tilde I_2
=
\int_{\lambda(1)}^{\lambda(0)}
\left(\hat{U}_n(b)-g(b)\right)^2
\frac{|\lambda'(g(b))|}{4b}
\,\mathrm{d}b+o_P(n^{-5/6}).
\]

We then mimic step 2 in the proof of Theorem~2 in~\cite{durot2007}.
Consider the representation
\[
B_n(t)=W_n(t)-\xi_n t,
\]
where $W_n$  is a standard Brownian motion, $\xi_n=0$ if $B_n$ is Brownian motion,
and $\xi_n$ is a standard normal random variable independent of $B_n$,
if $B_n$ is a Brownian bridge.
Then, define
\[
\mathbb{W}_{t}(u)
=
n^{1/6}
\left\{
W_n(L(t)+n^{-1/3})-W_n(L(t))
\right\},
\quad
\text{for }t\in[0,1],
\]
which has the same distribution as a standard Brownian motion.
Now, for $t\in[0,1]$, let $d(t)=|\lambda'(t)|/(2L'(t)^2)$ and define
\begin{equation}
\label{def:tilde V}
\tilde V(t)
=
\argmax_{|u|\leq\log n}
\left\{
\mathbb{W}_{t}(u)-d(t)u^2
\right\}.
\end{equation}
Then similar to~(26) in~\cite{durot2007}, we will obtain
\begin{equation}
\label{eqn:eq2}
n^{2/3}I_n
=
\int_0^1\left|\tilde{V}(t)-n^{-1/6}\frac{\xi_n}{2d(t)} \right|^2\left|\frac{\lambda'(t)}{L'(t)}\right|^2\frac{1}{4\lambda(t)}\,\mathrm{d}t+o_P(n^{-1/6}).
\end{equation}
To prove~\eqref{eqn:eq2}, by using the approximation
\[
\hat U_n(a)-g(a)\approx\frac{L(\hat U_n(a))-L(g(a))}{L'(g(a))}
\]
and a change of variable $a^{\xi}=a-n^{1/2}\xi_nL'(g(a))$, we first obtain
\[
n^{2/3}I_n
=
n^{2/3}
\int_{\lambda(1)+\delta_n}^{\lambda(0)-\delta_n}
\left|
L(\hat U_n(a^{\xi}))-L(g(a^{\xi}))
\right|^2
\frac{|\lambda'(g(a))|}{(L'(g(a)))^2}\frac{1}{4a}
\,\mathrm{d}a+o_p(n^{-1/6}),
\]
where $\delta_n=n^{-1/6}/\log n$.
Apart from the factor $1/4a$,
the integral on the right hand side is the same as in the proof of Theorem~2 in~\cite{durot2007} for $p=2$.
This means that we can apply the same series of succeeding approximations for
$L(\hat U_n(a^{\xi}))-L(g(a^{\xi}))$ as in~\cite{durot2007}, which yields
\[
n^{2/3}I_n
=
n^{2/3}
\int_{\lambda(1)+\delta_n}^{\lambda(0)-\delta_n}
\left|
\tilde V(g(a))-n^{-1/6}\frac{\xi_n}{2d(g(a))}
\right|^2
\frac{|\lambda'(g(a))|}{(L'(g(a)))^2}\frac{1}{4a}
\,\mathrm{d}a+o_p(n^{-1/6}).
\]
Finally, because the integrals over $[\lambda(1),\lambda(1)+\delta_n]$
and $[\lambda(0)-\delta_n,\lambda(0)]$ are of the order~$o_p(n^{-1/6})$,
this yields~\eqref{eqn:eq2} by a change of variables $t=g(a)$.

The next step is to show that the term with $\xi_n$ can be removed from~\eqref{eqn:eq2}.
This can be done exactly as in~\cite{durot2007}, since the only difference with the corresponding integral in~\cite{durot2007}
is the factor $1/4\lambda(t)$, which is bounded and does not influence the argument in~\cite{durot2007}.
We find that
\[
n^{2/3}I_n
=
\int_0^1
|\tilde{V}(t)|^2\left|\frac{\lambda'(t)}{L'(t)}\right|^2\frac{1}{4\lambda(t)}\,\mathrm{d}t+o_P(n^{-1/6}).
\]

Then define
\begin{equation}
\label{def:Yn}
Y_n(t)=\left(|\tilde{V}(t)|^2-\E\left[|\tilde{V}(t)|^2\right]\right)\left|\frac{\lambda'(t)}{L'(t)}\right|^2\frac{1}{4\lambda(t)}.
\end{equation}
By approximating $\tilde V(t)$ by
\[
V(t)
=
\argmax_{u\in\R}
\left\{
\mathbb{W}_{t}(u)-d(t)u^2
\right\},
\]
and using that, by Brownian scaling, $d(t)^{2/3}V(t)$ has the same distribution as $X(0)$,
see~\cite{durot2007} for details, we have that
\[
\begin{split}
\int_0^1
\E\left[|\tilde{V}(t)|^2\right]
\left|\frac{\lambda'(t)}{L'(t)}\right|^2\frac{1}{4\lambda(t)}\,\mathrm{d}t
&=
\E\left[|X(0)|^2\right]
\int_0^1d(t)^{-4/3} \left|\frac{\lambda'(t)}{L'(t)}\right|^2\frac{1}{4\lambda(t)}\,\mathrm{d}t
+
o(n^{-1/6})\\
&=
\mu^2+o(n^{-1/6}).
\end{split}
\]
It follows that
\[
n^{1/6}(I_n-\mu^2)=n^{1/6}\int_0^1Y_n(t)\,\mathrm{d}t+o_P(1).
\]
We then first show that
\begin{equation}
\label{eq:limit variance}
\mathrm{Var}\left(n^{1/6}\int_0^1Y_n(t)\,\mathrm{d}t\right)\to\sigma^2.
\end{equation}
Once more, following the proof in~\cite{durot2007} we have
\[
\begin{split}
v_n
&=
\mathrm{Var}\left(\int_0^1Y_n(t)\,\mathrm{d}t\right)\\
&=
2\int_0^1\int_s^1\left|\frac{\lambda'(t)}{L'(t)}\frac{\lambda'(s)}{L'(s)} \right|^2\frac{1}{4\lambda(t)}\frac{1}{4\lambda(s)}
\mathrm{cov}(|\tilde{V}(t)|^2,|\tilde{V}(s)|^2)\,\mathrm{d}t\,\mathrm{d}s.
\end{split}
\]
After the same sort of approximations as in~\cite{durot2007}, we get
\[
v_n
=
2\int_0^1\int_s^{\min(1,s+c_n)}
\left|\frac{\lambda'(s)}{L'(s)}\right|^4\frac{1}{(4\lambda(s))^2}
\mathrm{cov}(|V_t(s)|^2,|V_s(s)|^2)\,\mathrm{d}t\,\mathrm{d}s
+
o(n^{-1/3}),
\]
where $c_n=2n^{-1/3}\log n/\inf_t L'(t)$ and where, for all $s$ and $t$,
\[
V_t(s)
=
\argmax_{u\in\R}
\left\{
\mathbb{W}_t(u)-d(s)u^2
\right\}.
\]
Then use that $d(s)^{2/3}V_t(s)$ has the same distribution as
\[
X\big(n^{1/3}d(s)^{2/3}\big(L(t)-L(s)\big)\big)-n^{1/3}d(s)\big(L(t)-L(s)\big),
\]
so that the change of variable $a=n^{1/3}d(s)^{2/3}(L(t)-L(s))$ in $v_n$ leads to
\[
\begin{split}
n^{1/3}v_n
&\to
2\int_0^1\int_{0}^{\infty}
\left|\frac{\lambda'(s)}{L'(s)}\right|^4\frac{1}{(4\lambda(s))^2}
\frac{1}{d(s)^{10/3}L'(s)}\mathrm{cov}(|X(a)|^2,|X(0)|^2)\,\mathrm{d}a\,\mathrm{d}s\\
&\to
2k_2\int_0^1
\left|\frac{\lambda'(s)}{L'(s)}\right|^4\frac{1}{(4\lambda(s))^2}
\frac{2^{10/3}|L'(s)|^{17/3}}{|\lambda'(s)|^{10/3}}
\,\mathrm{d}s
=
\sigma^2,
\end{split}
\]
which proves~\eqref{eq:limit variance}.

Finally, asymptotic normality of $n^{1/6}\int_0^1Y_n(t)\,\mathrm{d}t$ follows by Bernstein's method of big blocks and small blocks
in the same way as in step~6 of the proof of Theorem~2 in~\cite{durot2007}.
\end{proof}

\begin{cor}
\label{cor:hellinger}
Assume (A1), (A2'), (A3), (A4), and~\eqref{eq:holder bound} and let $H(\hat{\lambda}_n,\lambda)$
be the Hellinger distance defined in~\eqref{def:hellinger distance}.
Moreover, suppose that $\lambda$ is strictly positive.
Then,
\[
n^{1/6}
\left\{
n^{1/3}H(\hat{\lambda}_n,\lambda)-\tilde{\mu}
\right\}
\to N(0,\tilde{\sigma}^2),
\]
where
$\tilde{\mu}=2^{-1/2}\mu$ and $\tilde{\sigma}^2=\sigma^2/8\mu^2$, where $\mu^2$ and $\sigma^2$ are defined in
Theorem~\ref{theo:hellinger}.
\end{cor}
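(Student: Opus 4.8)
The plan is to obtain the corollary from Theorem~\ref{theo:hellinger} by a straightforward application of the delta method to the square-root map. Write
\[
T_n=\int_0^1\left(\sqrt{\hat\lambda_n(t)}-\sqrt{\lambda(t)}\right)^2\,\mathrm{d}t,
\]
so that by~\eqref{def:hellinger distance} we have $H(\hat\lambda_n,\lambda)=\sqrt{T_n/2}$, and hence $n^{1/3}H(\hat\lambda_n,\lambda)=\sqrt{n^{2/3}T_n/2}=f(n^{2/3}T_n)$ with $f(x)=\sqrt{x/2}$. Theorem~\ref{theo:hellinger} says precisely that $n^{1/6}(n^{2/3}T_n-\mu^2)\to N(0,\sigma^2)$, and in particular that $n^{2/3}T_n\to\mu^2$ in probability.

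First I would check that $\mu^2>0$. This follows from (A1) and (A4) together with the strict positivity of $\lambda$: under these assumptions the integrand $|\lambda'(t)L'(t)|^{2/3}/(2^{2/3}\lambda(t))$ in the expression for $\mu^2$ in Theorem~\ref{theo:hellinger} is strictly positive on $[0,1]$, so its integral, times $\E[|X(0)|^2]>0$, is positive. Consequently $f$ is continuously differentiable in a neighbourhood of $\mu^2$, with $f(\mu^2)=\mu/\sqrt2=\tilde\mu$ and $f'(\mu^2)=1/(2\sqrt2\,\mu)$, so that $f'(\mu^2)^2=1/(8\mu^2)$.

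Then, on the event $\{n^{2/3}T_n>0\}$, whose probability tends to one, a mean-value expansion gives $n^{1/3}H(\hat\lambda_n,\lambda)-\tilde\mu=f'(\zeta_n)(n^{2/3}T_n-\mu^2)$ for some random $\zeta_n$ lying between $\mu^2$ and $n^{2/3}T_n$. Since $n^{2/3}T_n\to\mu^2$ in probability, $\zeta_n\to\mu^2$ in probability, and by continuity of $f'$ at $\mu^2$ we get $f'(\zeta_n)\to f'(\mu^2)$ in probability. Multiplying by $n^{1/6}$ and combining with Theorem~\ref{theo:hellinger} and Slutsky's lemma then yields
\[
n^{1/6}\left(n^{1/3}H(\hat\lambda_n,\lambda)-\tilde\mu\right)
=
f'(\zeta_n)\cdot n^{1/6}\left(n^{2/3}T_n-\mu^2\right)
\to
f'(\mu^2)\,N(0,\sigma^2)
=
N\left(0,\sigma^2/(8\mu^2)\right),
\]
which is the assertion with $\tilde\sigma^2=\sigma^2/(8\mu^2)$.

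There is no genuine obstacle in this argument; it is a routine post-processing of the main theorem. The only point requiring a word of care is that the fluctuation of $n^{2/3}T_n$ about $\mu^2$ is itself a random quantity rather than a deterministic scalar times a fixed convergent sequence, which is why I use the mean-value form of the expansion together with Slutsky's lemma instead of quoting the delta method verbatim. The nondegeneracy condition $\mu^2>0$, needed both for smoothness of $f$ at the centring constant and to make sense of $\tilde\sigma^2$, is exactly where the hypothesis that $\lambda$ is strictly positive (and (A1), (A4)) enters.
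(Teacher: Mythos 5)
Your argument is exactly the paper's: the paper applies the delta method with $\phi(x)=2^{-1/2}\sqrt{x}$ to Theorem~\ref{theo:hellinger}, which is precisely your $f$. Your mean-value-plus-Slutsky presentation is just a spelled-out version of the delta method, and your check that $\mu^2>0$ is a correct (if routine) verification of the nondegeneracy needed to apply it.
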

\begin{proof}
This follows immediately by applying the delta method with $\phi(x)=2^{-1/2}\sqrt{x}$ to the result in Theorem~\ref{theo:hellinger}.
\end{proof}

\section{Discussion}
\label{sec:discussion}
The type of scaling for the Hellinger distance in Corollary~\ref{cor:hellinger} is similar to that
in the central limit theorem for $L_p$-distances.
This could be expected in view of the approximation in terms of a weighted squared $L_2$-distance, see Lemma~\ref{lem:approx by L2},
and the results, e.g., in~\cite{kulikov-lopuhaa2005} and~\cite{durot2007}.
Actually, this is not always the case.  
The phenomenon of observing different speeds of convergence for the Hellinger distance from those we for the $L_1$ and $L_2$ norms is considered in~\cite{Birge86}. 
In fact, this is related to the existence of a lower bound for the function we are estimating. 
If the function of interest is bounded from below, which is the case considered in this paper, then the approximation~\eqref{eq:approx by L2} holds, see~\cite{Birge86} for an explanation.

When we insert the expressions for $\mu^2$ and $\sigma^2$ from Theorem~\ref{theo:hellinger}, then we get
\[
\tilde\sigma^2
=
\frac{k_2}{4\E\left[|X(0)|^2\right] }
\frac{\int_0^1|\lambda'(t)L'(t)|^{2/3}L'(t)\lambda(t)^{-2}\,\mathrm{d}t}{
\int_0^1|\lambda'(t)L'(t)|^{2/3}\lambda(t)^{-1}\,\mathrm{d}t},
\]
where $k_2$ is defined in~\eqref{def:k2}.
This means that in statistical models where $L=\Lambda$ in condition~(A4), and hence~$L'=\lambda$, the limiting variance $\tilde\sigma^2=k_2/(4\E[|X(0)|^2])$
does not depend on $\lambda$.

One such a model is estimation of the common monotone density $\lambda$ on $[0,1]$ of independent random variables $X_1,\ldots,X_n$.
Then, $\Lambda_n$ is the empirical distribution function of $X_1,\ldots,X_n$ and $\hat\lambda_n$ is Grenander's estimator~\cite{grenander1956}.
In that case, if $\inf_t\lambda(t)>0$, the conditions of Corollary~\ref{cor:hellinger} are satisfied with $L=\Lambda$
(see Theorem~6 in~\cite{durot2007}),
so that the limiting variance of the Hellinger loss for the Grenander estimator does not depend on the underlying density.
This behavior was conjectured in~\cite{wellner2015} and coincides with that of the limiting variance in the central
limit theorem for the $L_1$-error for the Grenander estimator, first discovered by~\cite{groeneboom1985}
(see also~\cite{groeneboom-hooghiemstra-lopuhaa1999,durot2002} and~\cite{kulikov-lopuhaa2005,durot2007}).

Another example is when we observe independent identically distributed inhomogeneous Poisson processes $N_1,\ldots,N_n$
with common mean function $\Lambda$ on $[0,1]$ with derivative $\lambda$, for which $\Lambda(1)<\infty$.
Then $\Lambda_n$ is the restriction of $(N_1+\cdots+N_n)/n$ to $[0,1]$.
Also in that case, the conditions of Corollary~\ref{cor:hellinger} are satisfied with $L=\Lambda$
(see Theorem~4 in~\cite{durot2007}),
so that the limiting variance of the Hellinger loss for $\hat\lambda_n$ does not depend on the common underlying intensity $\lambda$.
However, note that for this model, the $L_1$-loss for $\hat\lambda_n$ is asymptotically normal according to Theorem~2 in~\cite{durot2007},
but with limiting variance depending on the value $\Lambda(1)-\Lambda(0)$.

Consider the monotone regression model $y_{i,n}=\lambda(i/n)+\epsilon_{i,n}$, for $i=1,\ldots,n$,
where the~$\epsilon_{i,n}$'s are i.i.d.~random variables with mean zero and variance $\sigma^2>0$.
Let
\[
\Lambda_n(t)=\frac1n\sum_{i\leq nt} y_{i,n},
\quad
t\in[0,1],
\]
be the empirical distribution function.
Then $\hat\lambda_n$ is (a slight modification of) Brunk's estimator from~\cite{brunk1958}.
Under appropriate moment conditions on the $\epsilon_{i,n}$,
the conditions of Corollary~\ref{cor:hellinger} are satisfied with $L(t)=t\sigma^2$
(see Theorem~5 in~\cite{durot2007}).
In this case, the limiting variance of the Hellinger loss for $\hat\lambda_n$ depends on both $\lambda$ and $\sigma^2$,
whereas the  the $L_1$-loss for~$\hat\lambda_n$ is asymptotically normal according to Theorem~2 in~\cite{durot2007},
but with limiting variance only depending on $\sigma^2$.

Suppose we observe a right-censored sample $(X_1,\Delta_1),\ldots,(X_n,\Delta_n)$, where
$X_i=\min(T_i,Y_i)$ and $\Delta_i=\1_{\{T_i\leq Y_i\}}$, with
the $T_i$'s being nonnegative i.i.d.~failure times and the $Y_i$'s are i.i.d.~censoring times independent of the $T_i$'s.
Let $F$ be the distribution function of the $T_i$'s with density $f$ and let $G$ be the distribution function of the $Y_i$'s.
The parameter of interest is the monotone failure rat $\lambda=f/(1-F)$ on $[0,1]$.
In this case, $\Lambda_n$ is the restriction of he Nelson-Aalen estimator to $[0,1]$.
If we assume (A1) and $\inf_t \lambda(t)>0$, then under suitable assumptions on $F$ and $G$ the conditions of
Corollary~\ref{cor:hellinger} hold with
\[
L(t)=\int_0^t \frac{\lambda(u)}{(1-F(u)))(1-G(u))}\,\mathrm{d}u,
\quad
t\in[0,1],
\]
(see Theorem~3 in~\cite{durot2007}).
This means that the limiting variance of the Hellinger loss depends on $\lambda$, $F$ and $G$, whereas
the  limiting variance of the $L_1$-loss depends only on their values at~0 and~1.
In particular, in the case of nonrandom censoring times, $L=(1-F)^{-1}-1$, the limiting variance of the Hellinger loss
depends on $\lambda$ and $F$,
whereas the  limiting variance of the $L_1$-loss depends only on the value $F(1)$.

\printbibliography
\end{document}